  \theoremstyle{plain}
    \newtheorem{thm}{Theorem}[section]
    \newtheorem{prop}[thm]{Proposition}
   \newtheorem{lemma}[thm]{Lemma}
    \newtheorem{corollary}[thm]{Corollary}
    \newtheorem{subsec}[thm]{}
\theoremstyle{definition}
    \newtheorem{defn}[thm]{Definition}
    \newtheorem{exam}[thm]{Example}
\theoremstyle{remark}
     \newtheorem{remark}[thm]{Remark}
\title{}
\author{}
\date{}
\begin{document}
\title{Equivariant one-parameter deformations of associative algebras}

\author{Goutam Mukherjee}
\email{gmukherjee.isi@gmail.com}
\address{Stat-Math Unit,
Indian Statistical Institute, Kolkata 700108,
West Bengal, India.}

\author{Raj Bhawan Yadav}
\email{rbyadav15@gmail.com}
\address{Indian Statistical Institute, North-East Center, (temporary campus)
Tezpur University, Assam 784028}

\subjclass[2010]{Primary: 13D03, 13D10, 14D15, 16E40, Secondary: 55M35, 55N91.}
\keywords{Group actions, Hochschild cohomology, formal deformations, equivariant cohomology}

\thispagestyle{empty}

\begin{abstract}
We introduce an equivariant version of Hochschild cohomology as the deformation cohomology to study equivariant deformations of associative algebras equipped with finite group actions.  \end{abstract}
\maketitle


\vspace{0.5cm}

\section{Introduction}
The idea of algebraic deformation theory was introduced by M. Gerstenhaber \cite{G1, G2, G3, G4, G5}. He introduced deformation theory for associative algebras. His theory was extended to Lie algebras by A. Nijenhuis and R. Richardson \cite{NR1}, \cite{NR2}, \cite{NR3}. Let $k$ be a fixed field. Given a type of algebra $\mathcal{A}$ over $k,$ (usually denoted by {\bf As, Lie, Com, Dias, Leib}, etc.), its formal one-parameter deformation theory has been extensively studied in the literature, following Gerstenhaber' work. In recent time, more general algebraic deformation theory has been studied for Loday algebras \cite{L1, L2, L3} and other type of algebras by several authors (see \cite{FiFu, FMM, Fox, MM1, MM2, AM1, AG, GS1, GS2, DY1, DY2}). 

Let $X$ be an object in a category of algebras. Roughly speaking, a deformation of $X$ is a family ${X_t}$ of objects whose structures are obtained by ``deforming'' the structure on $X$ as $t$ varies over a suitable space of parameters in a smooth way.  If $A$ is an algebra over a commutative ring $k$, a one-parameter algebraic deformation of A is a family of algebras $\{A_t\}$ parameterized by $t$ such that $A_0\cong  A$ and the multiplicative structure of $A_t$ varies algebraically with $t$.  

M. Gerstenhaber considered the case where $A$ is an associative $k$-algebra and the deformation of $A$ is again associative.  

Let $A[[t]]$ be the $k[[t]]$-module of formal power-series with coefficients in the $k$-module $A.$ The algebra $A$ is a submodule of $A[[t]].$ Of course, $A[[t]]$ is an algebra where the algebra structure is obtained by bilinearly extending the multiplication of $A$, but one may also impose other multiplications on $A[[t]]$ that agree with that of $A$ when we specialize to $t=0$. Suppose a multiplication $m:A[[t]] \otimes _{k[[t]]}A[[t]] \rightarrow A[[t]] $ is given by a formal power-series of the form 
\begin{align*} 
m(a,b) = m_0(a,b) + m_1(a,b)t + m_2(a,b)t^2+ \cdots . 
\end{align*}  
Note that since the multiplication $m$ is defined over $k[[t]]$, it is enough to consider $a$ and $b$ in $A.$ It is further assumed that each $m_n$ is a linear map $A\otimes_k A\rightarrow  A$ with $m_0(a,b) = ab$ (multiplication in $A$). 

A {\em one-parameter formal deformation} of a $k$-algebra $A$ is  a formal power-series $m = \sum_{n=0}^\infty  m_nt^n$ with coefficients in $\mbox{Hom}_k(A\otimes_kA,A)$ such that  $m_0:A\otimes_k A\rightarrow A$ is the multiplication in $A$. The deformation is {\em associative} if $m(m(a,b),c) = m(a,m(b,c))$ for all $a,b,c$ in $A$.

M. Gerstenhaber introduced a notion of infinitesimal deformation and showed that under certain cohomological conditions it is possible to obtain deformed algebras. The associated cohomology (called deformation cohomology) is the Hochschild cohomology for associative algebras. He also introduced the notion of equivalence of deformed objects. 

The aim of the present article is to introduce and study the above problem in the equivariant world. More specifically, we consider the category {\bf As($G$)} of associative algebras equipped with actions of a finite group $G$ and equivariant associative algebra maps and study equivariant deformation theory in this category. Note that given any associative algebra $A$, any finite subgroup of the group of self isomorphisms of $A$ acts on $A.$ We denote an object of this category by $(G, A)$ and call it an action of $G$ on $A$ (or, simply, an action). Moreover, for a given action $(G, A),$ the action of $G$ on $A$ naturally extends to an action on $A[[t]].$ The algebra $A$ is a $G$-submodule of $A[[t]]$, and we could make $A[[t]]$ an algebra by bilinearly extending the multiplication of $A$ and the induced $G$-action is preserved in the sense that the induced multiplication is equivariant. We shall denote this by $(G, A[[t]]).$ The question in the equivariant context is to see whether there are other multiplications on $A[[t]]$ that agree with that of $A$ when we specialize to $t=0$ and preserve the group action. Moreover, equivariant deformation theory should classify deformations preserving symmetries of the deformed objects.

Let $\mbox{Hom}^G_k(A\otimes_kA,A)$ denote the space of all $k$-linear maps $A\otimes_k A \rightarrow A$ which are $G$-equivariant with the diagonal action of the group $G$ on $A \times A.$ 

Given an action $(G, A),$ ideally, a {\em one-parameter equivariant formal deformation} of $(G,A)$ should be a formal power-series $m = \sum_{n=0}^\infty  m_nt^n$ with coefficients 
$$m_i \in \mbox{Hom}^G_k(A\otimes_kA,A)$$ such that  $m_0:A\otimes_k A\rightarrow A$ is the multiplication in $A$. The deformation is {\em associative} if $m(m(a,b),c) = m(a,m(b,c))$ for all $a,b,c$ in $A$.

To this end,
\begin{enumerate}
\item we formulate equivariant Hochschild cohomology of $(G, A)$ with coefficients in a $G$-bimodule;
\item introduce the notion of invariant infinitesimal deformations;
\item study the problem of extending an infinitesimal deformation to a full-blown equivariant deformation;
\item and introduce the notion equivalence of deformed objects in the equivariant set up. 
\end{enumerate}      

Throughout the paper $k$ will be a fixed field and $G$ will denote a finite group. The tensor product over $k$ will be denoted by $\otimes$. A typical element of $A^{\otimes n}$ will be denoted by $(x_1, \ldots, x_n).$

The paper is organized as follows. In Section \ref{recall}, we recall known results and set up notations that we will use throughout the paper. In Section \ref{deformation-cohomology}, we define the notion of a $G$-bimodule over an associative algebra equipped with an action of $G$ and introduce equivariant Hochschild cohomology with coefficients in a $G$-bimodule. In Section \ref{equivariant-deformation}, we extend the classical formal one-parameter deformation theory of associative algebras in the equivariant context. We show that the equivariant Hochschild cohomology plays the role of the deformation cohomology in the sense that it controls equivariant deformation. Finally, in Section \ref {equivalence-of-deformed object}, we introduce the notions of equivalence of deformed objects and equivariantly rigid objects in the present context.

\section{Preliminaries}\label{recall}
In this section we gather standard information concerning deformations of associative algebras. The main purpose is to introduce the basic notation
and terminology that we will use throughout this paper. The standard references are \cite{G1, G2, L0}.

\begin{defn}\label{bimodule}
Let $A$ be an associative $k$-algebra. A bimodule over $A$ is a $k$-module $M$ equipped with a linear left $A$-action and a right $A$-action such that $(am)a^\prime = a(ma^\prime)$ for $a, a^\prime \in A$ and $m \in M.$ The actions of $A$ and $k$ on $M$ are assumed to be compatible, for instance:
$$(\lambda a)m = \lambda (am) = a(\lambda m),~~ \lambda \in k, ~a \in A,~ m \in M.$$ When $A$ has a unit element $1_A$, we always assume that $1_Am =m1_A = m$ for all $m \in M.$  
\end{defn}
We shall denote the product map of $A$ by $\mu : A\otimes A \rightarrow A. ~~ \mu (a, b) = ab.$ Note that $M = A$ is an $A$-bimodule where the actions are given by the product in $A.$

Let us briefly describe the definition of Hochschild cohomology of an an associative algebra $A$ with coefficients in a bimodule $M.$

Consider the module $C_n(A; M) := M\otimes A^{\otimes n}.$ The Hochschild boundary is the $k$-linear map $d : M\otimes A^{\otimes n} \rightarrow M\otimes A^{\otimes n-1}$ given by the formula

\begin{align*}
d(m,a_1, \ldots a_n):& = (ma_1, a_2, \ldots a_n)\\
& + \sum_{i=1}^{n-1}(-1)^i(m,a_1, \ldots, a_ia_{i+1}, \ldots, a_n) +(-1)^n(a_nm,a_1, \ldots a_{n-1}).
\end{align*}

This gives a chain complex $C_\sharp(A; M)=\{C_n(A; M), d\},$ where the module $M\otimes A^{\otimes n}$ is in degree $n,$ known as Hochschild complex. 

In the case where $M=A$ the Hochschild complex $C_\sharp(A;A),$ written simply as $C_\sharp(A)$  is 
$$C_\sharp(A) : \ldots \rightarrow A^{\otimes n+1}\stackrel{d}{\rightarrow}A^{\otimes n}\stackrel{d}{\rightarrow}\ldots \stackrel{d}{\rightarrow}A^{\otimes 2}\stackrel{d}{\rightarrow} A.$$    

Set $C^n(A;M) := Hom_k(C_{n-1}(A), M)$ and define Hochschild coboundary 
$$\delta : C^n(A;M) \rightarrow C^{n+1}(A;M)$$ as follows. For a cochain $f\in C^n(A; M)$, $\delta (f)$ is given by

\begin{align*}
& \delta (f)(x_1, \ldots , x_{n+1})\\ 
& = x_1f(x_2, \ldots , x_{n+1}) + \sum_{0< i <  n+1} (-1)^i f(x_1, \ldots,  x_ix_{i+1}, \ldots , x_{n+1}) \\
& + (-1)^{n+1}f(x_1, \ldots , x_n)x_{n+1}. 
\end{align*}

This gives a cochain complex $C^\sharp(A;M).$ The Hochschild cohomolgy groups are defined by 
$$H^n(A; M) := H_n(C^\sharp(A;M)).$$ 

For Gerstenhaber's theory we need Hochschild cohomology $H^3(A; A)$ of $A$ with coefficients in $A$, regarded as an $A$-bimodule. 
 
Let us briefly review the classical deformation theory of associative algebras. For deformation theory, we assume that the field $k$ is of characteristic zero.

Let A be an associative algebra over a field k of characteristic zero. Let $K=k[[t]]$ be the formal 
power-series ring with coefficients in $k$.  

\begin{defn} A formal one-parameter deformation of $A$ is a $K$-bilinear multiplication law $m_t : A[[t]]\times A[[t]] \longrightarrow A[[t]]$ on the space $A[[t]]$ of formal power-series in a variable $t$ with coefficients in $A$, satisfying the following properties:
$$ m_t(a, b) = m_0(a,b) + m_1(a, b)t + m_2(a, b)t^2 + \cdots ~~ \mbox{ for} ~ a, b \in A,$$ where $m_i :A\times A\longrightarrow A$ are $k$-bilinear and $m_0(a, b) = \mu (a,b) =ab$ is the original multiplication on $A$, and $m_t$ is associative.
\end{defn}
Thus, a formal deformation of $A$ is an associative algebra structure $m_t$ on $A[[t]]$ over $K$ such that for $t=0,$ we get back the given algebra structure on $A$. 

Note that the condition of associativity of $m_t$ is equivalent to the equation
$$m_t(m_t(a, b), c) = m_t(a, m_t(b, c)), ~~ \mbox{for} ~ a, b, c \in A.$$  
Writing down the above equation explicitly, we get

\begin{eqnarray}\label{equation-1}
\sum_ {\stackrel{p+q = r}{p, q \geq 0}} m_p (m_q (a,b),c) - m_p (a,m_q (b,c)) = 0,~r \geq 0.
\end{eqnarray}

\begin{remark}\label{case-zero-one}
For $r=0$, this is just the associativity of the original multiplication. For $r=1$, the above condition implies 
$$am_1(b,c) - m_1(ab,c) + m_1(a,bc) - m_1(a,b)c =0.$$  
In terms of Hochschild theory this simply means that $m_1$ is a $2$-cocycle, that is, $m_1 \in Z^2(A,A) = \{c\in C^2(A; A) : \delta (c) =0\}.$ 
The $2$-cocycle $m_1$ is called the infinitesimal of the deformation.
\end{remark}

Thus, we have
\begin{prop}
The infinitesimal of a one-parameter family of formal deformation of an associative algebra is a Hochschild $2$-cocycle.
\end{prop}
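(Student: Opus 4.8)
The plan is to extract the first-order term in $t$ from the associativity condition on $m_t$ and to recognize it as the Hochschild coboundary of $m_1$. Since $m_t$ is assumed associative, the expanded associativity condition \eqref{equation-1} holds for every $r \geq 0$; the infinitesimal $m_1$ lives entirely in the $r=1$ stratum, so I would simply specialize \eqref{equation-1} to $r=1$. The only pairs $(p,q)$ with $p+q=1$ and $p,q \geq 0$ are $(1,0)$ and $(0,1)$, so the sum collapses to four terms and no infinite bookkeeping is needed.

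Concretely, I would write out the $r=1$ equation as
$$
m_1(m_0(a,b),c) + m_0(m_1(a,b),c) - m_1(a,m_0(b,c)) - m_0(a,m_1(b,c)) = 0,
$$
and then substitute $m_0 = \mu$, using $\mu(x,y) = xy$. This turns $m_0(m_1(a,b),c)$ into $m_1(a,b)\,c$ and $m_0(a,m_1(b,c))$ into $a\,m_1(b,c)$, while the two remaining terms become $m_1(ab,c)$ and $m_1(a,bc)$. Rearranging yields exactly the expression displayed in Remark \ref{case-zero-one}, namely $a\,m_1(b,c) - m_1(ab,c) + m_1(a,bc) - m_1(a,b)\,c = 0$.

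The final step is to compare this expression with the Hochschild coboundary $\delta$ in degree two. Evaluating the defining formula for $\delta$ on the $2$-cochain $m_1$ gives
$$
\delta(m_1)(a,b,c) = a\,m_1(b,c) - m_1(ab,c) + m_1(a,bc) - m_1(a,b)\,c,
$$
which is precisely the coefficient of $t$ obtained above. Hence associativity of $m_t$ forces $\delta(m_1) = 0$, that is, $m_1 \in Z^2(A;A)$, which is the assertion. The only point requiring care — and it is the sole place where a slip could occur — is matching the signs generated by isolating the $(1,0)$ and $(0,1)$ summands of \eqref{equation-1} against the alternating signs built into the definition of $\delta$; beyond this routine sign check there is no genuine obstacle.
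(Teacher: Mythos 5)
Your proposal is correct and follows exactly the paper's own argument (given in Remark \ref{case-zero-one}): extract the coefficient of $t$ from the associativity equation, i.e.\ specialize \eqref{equation-1} to $r=1$, substitute $m_0=\mu$, and identify the resulting identity $a\,m_1(b,c) - m_1(ab,c) + m_1(a,bc) - m_1(a,b)\,c = 0$ with $\delta(m_1)=0$. The sign bookkeeping you flag also checks out against the paper's coboundary formula, so nothing is missing.
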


\begin{remark} The base of a formal one-parameter deformation as defined above is $K$. More generally, a one-parameter deformation of order $n$, is a deformation with base $K/(t^{n+1})$, given by $m_t$ modulo $(t^{n+1})$. In this case, the associativity condition means that the $2$-cochains $m_{r}$, satisfy (\ref{equation-1}) for $0\leq r\leq n$. 
\end{remark} 

Next, comes the question of extending an infinitesimal deformation to a full-blown deformation. If we start with an arbitrary $2$-cocycle $m_1 \in Z^2(A,A)$, it need not be
an `infinitesimal' of a deformation. If it be so, 
then we say that  $m_1$ is integrable. The integrability 
of $m_1$ implies an infinite sequence of relations 
which may be interpreted as the vanishing of 
the `obstructions' to the integration of $m_1$. 

For suppose we have a deformation of $A$ of order $n \geq 1$, given by $m_t$ modulo $(t^{n+1})$. Then by the above remark, $2$-cochains $m_{r}$, satisfy (1) for $0\leq r\leq n$. In order that the given $n^{th}$ order deformation extends to an $(n+1)^{th}$ order deformation $m_t$ modulo $(t^{n+2})$, over $K/(t^{n+2})$, the multiplication $m_t$ modulo $(t^{n+2})$ must be associative, equivalently, (\ref{equation-1}) should hold for $0\leq r\leq n+1$.  Using coboundary formula, the equation for $r = n+1$ can be rewritten as

\begin{eqnarray}
\delta m_{n+1}(a,b,c) = \sum_ {\stackrel {p+ q = n+1}{p, q > 0}} m_p (m_q(a,b),c) - m_p (a,m_q (b,c)).
\end{eqnarray}
In view of this observation, given an $n^{th}$ order deformation $m_t$ modulo $t^{n+1}$ over $K/(t^{n+1})$, we define a function $F$ by
$$F (a, b, c) = \sum_ {\stackrel {p + q=n+1}{p,~q > 0}} m_p (m_q(a,b),c) - m_p (a,m_q (b,c)),~a, b, c \in A.$$

Then, $F$ is a $3$-cochain and is called the $(n+1)^{th}$ obstruction cochain in extending  a deformation of order $n$ to a deformation of order $n+1$. One of the main observations in deformation theory is the following result.

\begin{thm}
The $(n+1)^{th}$ obstruction cochain $F$ is a $3$-cocycle and the given $n^{th}$ order deformation extends to a deformation of order $(n+1)$ if and only if the the cohomology class of $F$ vanishes.
\end{thm}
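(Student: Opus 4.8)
The plan is to establish the two assertions in turn, treating first the equivalence (which is formal, given the preceding derivation) and then the cocycle property (which carries the computational weight).

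For the equivalence, I would start from the rewritten associativity equation for $r=n+1$, namely $\delta m_{n+1} = F$. Extending the given order-$n$ deformation to order $n+1$ means choosing a $2$-cochain $m_{n+1}\in C^2(A;A)$ so that the multiplication $m_t \bmod t^{n+2}$ stays associative, which by the discussion preceding the statement is exactly the requirement that $(\ref{equation-1})$ hold for $r=n+1$; and this is precisely the equation $\delta m_{n+1}=F$. Hence an extension exists if and only if $F$ lies in the image of $\delta$, i.e. $F\in B^3(A;A)$. Granting that $F$ is a $3$-cocycle, being a coboundary is exactly the condition that its class in $H^3(A;A)=Z^3(A;A)/B^3(A;A)$ vanish. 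This yields the ``if and only if'' as soon as the cocycle claim is in hand.

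For the cocycle property the task is to verify $\delta F=0$. The cleanest route is to encode the associativity constraints through the Gerstenhaber composition product: for $f\in C^p(A;A)$ and $g\in C^q(A;A)$ write $f\circ g=\sum_i (-1)^{(q-1)(i-1)} f\circ_i g$, where $f\circ_i g$ inserts $g$ into the $i$-th argument of $f$, and let $[\,\cdot\,,\,\cdot\,]$ be the associated graded bracket. With $\mu=m_0$ the Hochschild differential is, up to sign, $\delta f=[\mu,f]$; associativity of $\mu$ reads $\mu\circ\mu=0$, and the order-$n$ relations $(\ref{equation-1})$ become $\sum_{p+q=r} m_p\circ m_q=0$ for $0\le r\le n$. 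In this language $F=\sum_{p+q=n+1,\ p,q>0} m_p\circ m_q$ is the interior part of $\sum_{p+q=n+1} m_p\circ m_q$, the full sum differing from $F$ only by $\mu\circ m_{n+1}+m_{n+1}\circ\mu$, which recovers $\delta m_{n+1}$.

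I would then compute $\delta F=[\mu,F]$ and expand each $[\mu,m_p\circ m_q]$ using the graded pre-Lie identity relating $\circ$ and $[\,\cdot\,,\,\cdot\,]$. Collecting contributions by total weight, every term regroups into brackets of the partial sums $\sum_{p+q=r} m_p\circ m_q$ with $r\le n$, all of which vanish by the inductive associativity hypothesis, together with terms involving $\mu\circ\mu=0$; hence $\delta F=0$. The main obstacle is exactly this cancellation: tracking the signs and re-indexing the triple sums so that the lower-order relations apply is the delicate, bookkeeping-heavy step. If one prefers to avoid the bracket formalism, the same conclusion follows by expanding $\delta F(x_1,x_2,x_3,x_4)$ directly from the definition of $\delta$ and substituting the cocycle conditions $\delta m_r=\sum_{p+q=r,\ p,q>0}(\cdots)$ for $r\le n$; this is elementary but lengthy, and the bracket computation is merely an organized version of the same identity.
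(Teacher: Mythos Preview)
The paper does not actually prove this theorem: it appears in Section~\ref{recall} (Preliminaries) as a recalled classical result of Gerstenhaber, stated without proof. The paper later proves the \emph{equivariant} analogue (Section~\ref{equivariant-deformation}), but that proof treats the cocycle property as known from the non-equivariant case and only adds the observation that the obstruction and coboundary are invariant.

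Your proposal, by contrast, supplies the missing classical argument. The outline is correct and is essentially Gerstenhaber's original proof: recast the associativity relations as $\sum_{p+q=r} m_p\circ m_q=0$ for $r\le n$ via the pre-Lie composition, identify $F$ with the $r=n+1$ sum, and deduce $\delta F=0$ from the graded pre-Lie (or Jacobi) identity together with the lower-order relations. The equivalence part is also handled correctly, since the equation $\delta m_{n+1}=F$ is exactly equation~(\ref{equation-1}) at level $n+1$. One small caution on presentation: your description of the cancellation (``every term regroups into brackets of the partial sums $\sum_{p+q=r} m_p\circ m_q$ with $r\le n$'') is the heart of the matter but is asserted rather than shown; the cleanest way to make it rigorous is to package $m=\sum_{i\le n} m_i t^i$, note $m\circ m\equiv F\,t^{n+1}\pmod{t^{n+2}}$, and read off $[\mu,F]=0$ from the graded Jacobi identity $[m,[m,m]]=0$ at order $t^{n+1}$. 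That removes the bookkeeping hazard you flag.
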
 

We immediately obtain
\begin{corollary}
If $H^3(A,A) = 0$ then any Hochschild $2$-cocyle is integrable.
\end{corollary}

Given two associative deformations $(A[[t]], m_t)$ and $(A[[t]], n_t),$ a formal isomorphism
$$\Psi : (A[[t]], m_t)\rightarrow (A[[t]], n_t)$$  
is a k[[t]]-linear automorphism of the form 
$$\Psi (a) = \psi_0(a) + \psi_1(a)t + \psi_2(a)t^2 + \cdots$$
where  each $\psi_i$ is a $k$-linear map $A\rightarrow A,$ $\psi_0(a) = a$ for $a\in A$ and 
$$n_t(\Psi (a), \Psi (b)) = \Psi (m_t(a, b))$$ for all $a, ~b \in A.$ Observe that it is enough to consider $a \in A,$ since $\Psi$ is defined over $k[[t]].$	
In that case, the two deformations are said to be equivalent.

It is a standard fact that if $H^2(A; A) =0$ then, $A$ has only trivial deformation and in this case, $A$ is called rigid.

\section{Group actions and Equivariant Hochschild cohomology}\label{deformation-cohomology}
\begin{defn}\label{associative-algebra-group-action}
Let $A$ be an associative $k$-algebra with product $\mu (a, b) = ab$ and $G$ be a finite group. The group $G$ is said to act on $A$ from the left if there exists a function 
$$\phi : G\times A \rightarrow A,~~ (g, a) \mapsto \phi (g, a) = ga$$ satisfying the following conditions.
\begin{enumerate}
\item $ex= x$ for all $x \in A$, where $e \in G$ is the group identity.
\item $g_1(g_2x) = (g_1g_2)x$ for all $g_1, g_2 \in G$ and $x \in A$.
\item For every $g \in G$, the left translation $\phi_g = \phi (g, ~) : A \rightarrow A, ~~a \mapsto ga$ is a linear map.
\item For all $g\in G$ and $a, b \in A,$ $\mu (ga, gb) = g\mu (a, b)= g(ab),$ that is, $\mu$ is equivariant with respect to the diagonal action on $A\times A.$
\end{enumerate}
\end{defn}

We shall denote an action as above by $(G, A).$ 

The following is an equivalent formulation of the above definition.
\begin{prop}
Let $G$ be a finite group and $A$ be an associative algebra. Then $G$ acts on $A$ if and only if there exists a group homomorphism 
$$\psi : G \rightarrow \text{Iso}_{As} (A, A),~~g \mapsto \psi(g)=\phi_g$$ from the group $G$ to the group of algebra isomorphisms from $A$ to $A$.  
\end{prop}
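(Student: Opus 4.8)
The plan is to prove both implications of this equivalence directly, by translating between the ``action'' data of Definition~\ref{associative-algebra-group-action} and a homomorphism into $\text{Iso}_{As}(A,A)$.

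For the forward direction, I would start from an action $(G,A)$ and set $\psi(g) = \phi_g$ for each $g \in G$. The first step is to check that $\phi_g$ genuinely lands in $\text{Iso}_{As}(A,A)$, that is, that it is an algebra isomorphism. Linearity is immediate from axiom (3), and multiplicativity is exactly axiom (4), since $\phi_g(ab) = g(ab) = \mu(ga,gb) = (ga)(gb) = \phi_g(a)\phi_g(b)$. The one point that uses the group structure is bijectivity: here I would exhibit $\phi_{g^{-1}}$ as a two-sided inverse, using axioms (1) and (2) to compute $\phi_g \circ \phi_{g^{-1}} = \phi_{gg^{-1}} = \phi_e = \Id$ and symmetrically. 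Once each $\phi_g$ is known to be an isomorphism, the homomorphism property $\psi(g_1 g_2) = \psi(g_1)\psi(g_2)$ is just a restatement of axiom (2), namely $\phi_{g_1 g_2}(x) = (g_1 g_2)x = g_1(g_2 x) = (\phi_{g_1}\circ \phi_{g_2})(x)$, while $\psi(e) = \Id$ is axiom (1).

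For the converse, I would start from a homomorphism $\psi$ and define $\phi(g,a) := \psi(g)(a)$, then verify the four axioms one at a time. Axiom (1) follows from $\psi(e) = \Id$, since a homomorphism preserves the identity; axiom (2) follows from the multiplicativity of $\psi$ on $G$, as $\phi(g_1,\phi(g_2,x)) = \psi(g_1)\bigl(\psi(g_2)(x)\bigr) = \psi(g_1 g_2)(x) = \phi(g_1 g_2, x)$; axiom (3) holds because every element of $\text{Iso}_{As}(A,A)$ is in particular $k$-linear; and axiom (4) holds because every such element is multiplicative, giving $\mu(ga,gb) = \psi(g)(a)\psi(g)(b) = \psi(g)(ab) = g\mu(a,b)$.

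I do not expect a genuine obstacle here: the statement is a standard dictionary between the two ways of encoding a $G$-action, and the only step that uses more than a direct unwinding of definitions is the verification that each $\phi_g$ is invertible, which is forced by the presence of $g^{-1}$ in the group. I would therefore keep the write-up short, emphasizing the inverse computation and otherwise simply matching axioms (1)--(4) with the homomorphism and isomorphism conditions.
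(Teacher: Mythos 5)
Your proposal is correct and follows exactly the same route as the paper: in both directions you use the same constructions the paper uses, namely $\psi(g)=\phi_g$ for the forward implication and $(g,a)\mapsto\psi(g)(a)$ for the converse. The only difference is that the paper leaves all verifications as ``straightforward to check,'' whereas you spell them out, including the one genuinely non-formal point (bijectivity of each $\phi_g$ via $\phi_{g^{-1}}$), which is a reasonable and complete filling-in of the paper's omitted details.
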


\begin{proof}
Given an action $(G, A),$ define a map as follows.
$$\psi : G \rightarrow \text{Iso}_{As} (A, A),~~g \mapsto \psi(g)=\phi_g.$$
It is straightforward to check that it is a group homomorphism. Conversely, given a group homomorphism 
$$\psi : G \rightarrow \text{Iso}_{As} (A, A),$$ define a map
$G \times A \rightarrow A$ by $(g, a) \mapsto \psi (g)(a).$ It is easy to check that this is an action of $G$ on the associative algebra $A.$ 
\end{proof}

\begin{exam}
Let $G$ be a finite group and $V$ be a representation space of $G.$ Let 
$$T^0(V) =k, ~~T^r(V) = \overbrace{V\otimes \cdots \otimes V}^{r\mbox{-copies}}, ~~r>0,$$ 
Let $T(V) = \oplus_{r\geq 0}T^r(V)$ be the tensor algebra on $V.$ This is the free associative algebra on $V.$ The linear action on $V$ induces an action $T(V)$ to yield an action $(G, T(V)).$
\end{exam}

\begin{exam}
The symmetric group $S_n$  acts on  the algebra $M_n(\mathbb R)$ (the algebra of all $n\times n$ matrices over $\mathbb R$ with respect to matrix addition and matrix multiplication) by interchanging rows (or, columns).
\end{exam}

\begin{exam}
Let $X$ be a $G$-set and let $A = \{ \alpha: X\rightarrow \mathbb R\}$ be the vector space of all real valued functions on $X$. Observe that $A$ is an algebra with the product $\alpha \beta (x) = \alpha(x) \beta(x)$ for all $x \in X.$ Define an action of $G$ on $A$ by $(g, \alpha) \mapsto g\alpha,$ where $(g\alpha)(x) =\alpha (gx), ~~x\in X.$ Note that for $g\in G$ and $\alpha, \beta \in A,$ 
$$g(\alpha \beta) (x)= (\alpha \beta)(gx)= \alpha (gx) \beta (gx)= (g\alpha)(x) (g\beta)(x)= (g\alpha)(g\beta)(x)$$ for all $x\in X.$ Thus, $(G, A)$ is an action.  
\end{exam}

\begin{exam}\label{deformation-example}
Consider the algebra $A = k[x, y]/(y^2-x^3).$ The group $\mathbb Z_2$ acts on the algebra $k[x, y]$ by replacing $y$ by $-y$, keeping the variable $x$ unchanged. This action preserves the ideal $(y^2-x^3),$ hence, induces an action on the algebra $A.$ Similarly, we have an action $(\mathbb Z_2, A),$ where $A =  k[[x, y]]/(y^2-x^3).$
\end{exam}

Let $(G, A)$ be a given action and $H < G$ be a subgroup of $G.$ Then, the H-fixed point set $A^H,$ defined by
$$A^H = \{a \in A : ha = a ~~~\forall h \in H\},$$ is a subalgebra of $A$. We shall denote the multiplication $\mu|(A^H\times A^H)$ on $A^H$ by $\mu^H.$  Moreover, note that if $H, K$ are subgroups of $G$ with $g^{-1}Hg \subset K,~~g\in G,$ then the associative algebra homomorphism $\phi_g$ maps $A^K$ to $A^H$. 

\begin{defn}\label{system-fixedpoint-subalgebra}
The family $\{A^H: H<G\}$ is called the system of fixed point subalgebra.
\end{defn}

Next, we introduce a notion of $G$-bimodule over an associative algebra $A$ equipped with an action of $G$. 

\begin{defn}
A $G$-bimodule over $(G, A)$ is a bimodule $M$ over $A$ such that $G$ acts linearly on $M$ and the left $A$-action and the right $A$-action   
$$A\times M\rightarrow M,~~ M \times  A\rightarrow M$$ on $M$ are equivariant (cf. Definition \ref{bimodule}). 
\end{defn}

Observe that for an action $(G, A),$ $M=A$ considered as a $G$-module is a $G$-bimodule over $(G, A)$ in the above sense. 

We now introduce equivariant Hochschild cohomology of an action $(G, A)$ with coefficients in a $G$-bimodule $M.$

Set 
\begin{align*}
C^n_G(A; M) :& = \{c\in C^n(A; M): c(\phi_g (a_1), \ldots, \phi_g(a_n))= gc(a_1, \ldots, a_n),~~g\in G\}\\
& = \{ c\in C^n(A; M): c(g a_1, \ldots, ga_n) = gc(a_1, \ldots, a_n),~~g\in G\},
\end{align*}
where $C^n(A; M)$ is the $n^{th}$-cochain group of the algebra $A$ with coefficients in the bimodule $M.$ In other words, $C^n_G(A; M)$ consists of all Hochschild $n$-cochains which are equivariant. Clearly, $C^n_G(A; M)$ is a submodule of $C^n(A; M).$ An element $c\in C^n_G(A; M)$ will be referred to as an invariant $n$-cochain. 

\begin{lemma}
If a cochain $c$ is invariant then $\delta (c)$ is also invariant. In other words,
$$c \in C^n_G (A; M)\Longrightarrow \delta (c) \in C^{n+1}_G (A; M).$$
\end{lemma}

\begin{proof}
Let $c \in C^n_G (A; M)$ be an invariant $n$-cochain and $g \in G.$  

Then, for every $(x_1, \ldots, x_n) \in A^{\otimes n}$ we have
\begin{align}\label{equality-one}
& c (\phi_g(x_1), \ldots, \phi_g(x_n))= g(c(x_1, \ldots , x_n)).
\end{align}
Next, recall from the definition of Hochschild coboundary that
\begin{align*} 
& \delta (c)(\phi_g(x_1), \ldots , \phi_g(x_{n+1}))\\
& = gx_1c(gx_2, \ldots , gx_{n+1})\\
& + \sum_{1\leq i <  n+1} (-1)^i c(gx_1, \ldots,  gx_igx_{i+1}, \ldots , gx_{n+1})\\
& + (-1)^{n+1} c(gx_1, \ldots , gx_n)gx_{n+1}\\
& = g\delta (c)( x_1, \ldots, x_{n+1}).
\end{align*}
Therefore, 
$$\delta (c) \in C^{n+1}_G (A; M).$$
\end{proof}

Thus $C^\sharp_G(A; M) =\{C^n_G(A; M), \delta\}$ is a cochain subcomplex of the Hochschild cochain complex.

\begin{defn}\label{equivariant-complex}
The cochain complex $C^\sharp_G(A; M)$ will be referred to as the equivariant Hochschild cochain complex of $(G, A)$ with coefficients in the $G$-bimodule $M$.
\end{defn}

\begin{defn}\label{equivariant-deformation-complex}
We define the equivariant $n^{th}$-Hochschild cohomology group of $(G, A)$ with coefficients in $M$ by 
$$H^n_G(A; M):= H_n(C^\sharp_G(A; M)).$$ For $M =  A$ we will denote this group by $H^n_G(A; A).$
\end{defn}

\section{Equivariant deformation}\label{equivariant-deformation}
In this section, we will introduce formal one-parameter deformation theory of associative algebras equipped with finite group actions. For this section we assume that $k$ is a field of characteristic zero. As in section \ref{recall}, let $K =k[[t]]$ be the formal power-series ring.

Let $(G, A)$ be a given action of $G$ on an associative algebra $A$.
\begin{defn}\label{definition-equivariant-deformation}
An equivariant formal one-parameter deformation of $(G, A)$ is a $K$-bilinear multiplication law 
$$m_t : A[[t]]\times A[[t]] \longrightarrow A[[t]],$$ on the space $A^[[t]]$ of formal power-series in a variable $t$ with coefficients in $A,$ satisfying the following properties:
\begin{enumerate}
\item $ m_t(a, b) = m_0(a,b) + m_1(a, b)t + m_2(a, b)t^2 + \cdots ~~ \mbox{ for} ~ a, b \in A,$ where $m_i :A\times A\longrightarrow A$ are $k$-bilinear and $m_0(a, b) = \mu (a,b) = ab$ is the original multiplication on $A$, and $m_t$ is associative.

\item For every $g \in G,$ 
$$m_i (ga, gb) =  g m_i (a, b), ~~a, ~b \in A$$ for all $i \geq 1,$ that is $m_i \in \mbox{Hom}^G_k(A\otimes A, A)$ for all $i \geq 1.$ 
\end{enumerate}
\end{defn}
Note that the second condition of the above definition holds automatically for $i=0$ because of the fact that $\mu$ is equivariant.

As in the non-equivariant case, the condition that $m_t$ be associative is equivalent to the equation
$$m_t(m_t(a, b), c) = m_t(a, m_t(b, c)),$$ for all $a, b, c \in A.$   

Writing down the above equation explicitly, we get  

\begin{eqnarray}\label{associativity-explicit}
\sum_ {\stackrel{p + q =r}{p, q \geq 0}} m_p (m_q (a,b),c) - m_p (a,m_q (b,c)) = 0,~r \geq 0.
\end{eqnarray}

More generally, we have the notion of equivariant deformation of finite order.

\begin{defn}\label{equivariant-deformation-finite-order}
An equivariant formal one-parameter deformation of $(G, A)$ of order $n$ is a deformation of $A$ with base $K/(t^{n+1}),$ given by a multiplication law $m_t$ modulo the ideal $(t^{n+1})$ satisfying the following properties:
\begin{enumerate}
\item $ m_t(a, b) = m_0(a,b) + m_1(a, b)t + m_2(a, b)t^2 + \cdots ~~ \mbox{ for} ~ a, b \in A,$ where $m_i :A\times A\longrightarrow A$ are $k$-bilinear and $m_0(a, b) = \mu (a,b) = ab$ is the original multiplication $\mu$ on $A$, and $m_t$ is associative modulo $(t^{n+1}).$

\item $m_i \in \mbox{Hom}^G_k(A\otimes A, A)$ for all $i \geq 1.$ 
\end{enumerate}
\end{defn}

\begin{remark}\label{associativity-equivariant-finite-order}
Note that the associativity condition modulo $(t^{n+1})$ means that the $2$-cochain $m_r$, satisfy (\ref{associativity-explicit}) for $0\leq r\leq n.$ 
\end{remark}
Assume that we have an equivariant deformation $m_t$ of $A.$ Then, 
$$m_1 \in C^2_G (A; A)$$ by condition $(2)$ of Definition \ref {definition-equivariant-deformation}. Moreover, $\delta(m_1) = 0$ by the non-equivariant case (cf. Remark \ref{case-zero-one}).

Thus, the $2$-cochain $m_1 \in C^2_G (A; A)$
is a $2$-cocycle and hence, represents a cohomology class in $H_G^2(A; A).$ 

\begin{defn}\label{equivariant-infinitesimal}
We call this $2$-cocycle $m_1 \in C^2_G (A; A)$ as the infinitesimal of the given equivariant deformation. An equivariant deformation of order one is called an equivariant infinitesimal deformation.
\end{defn}

The following example provides an equivariant deformation of order one of the action given in Example \ref{deformation-example}.

\begin{exam}
Consider the action $(\mathbb Z_2, A),$ where $A = k[[x, y]]/(y^2-x^3),$ with $k$ a field of characteristic zero. We may identify $A$ with $k[[x]] +yk[[x]].$  

Let $A[[t]]$ be the module of formal power-series in the variable $t$ with coefficients in $A$. Clearly, $A[[t]]$ is a $\mathbb Z_2$-module. With the above identification of $A$ we may define a product 
$$m_t : A[[t]] \times A[[t]] \rightarrow A[[t]],~~m_t=m_0 + m_1t$$ as follows. Here, $m_0$ is the original multiplication in $A.$ For $p(x),~~q(x) \in k[[x]]$ define

$$m_t(p(x), q(x)) = p(x)q(x);~~ m_t(y, y) = x^3 + x^2t;$$
$$m_t(p(x), yq(x))= m_t(yq(x), p(x)) = yp(x)q(x);$$
$$ m_t (yp(x), yq(x)) = p(x)q(x)(x^3+x^2t).$$ Thus,  $m_1(yp(x), yq(x)) = p(x)q(x)x^2.$ A direct computation shows that $m_1$ satisfies Equation (\ref{associativity-explicit}) for $r=1.$ In other words, $m_1$ is a cocycle and hence, the above binary operation defines an associative algebra structure on $A[[t]].$ Moreover, it preserves the $\mathbb Z_2$-action on $A[[t]].$ For $t=0,$ we get back the original action $(\mathbb Z_2, A).$ Thus, $(A[[t]], m_t)$ is an equivariant infinitesimal deformation of $A.$
\end{exam} 

Next, we discuss the problem of extending an equivariant deformation of $A$ of order $n$ to that of $A$ of order $n+1.$ 

Suppose we have an equivariant deformation of $A$ of order $n \geq 1$, given by $m_t$ modulo $(t^{n+1}).$ Then by  Remark \ref{associativity-equivariant-finite-order}, the $2$-cochain $m_{r} \in C^2_G(A; A)$ satisfy (\ref{associativity-explicit}) for $0\leq r\leq n.$ In order that the given $n^{th}$ order deformation extends to an $(n+1)^{th}$ order deformation $m_t$ modulo $(t^{n+2}),$ that is, an equivariant deformation of $A$ over $K/(t^{n+2})$, the multiplications $m_t$ modulo $(t^{n+2})$ must be associative, equivalently, equations (\ref{associativity-explicit})  should hold for $0\leq r\leq n+1$.

We write down the equation for $r = n+1$ using the the definition of the coboundary $\delta$ as 
\begin{eqnarray}
\delta m_{n+1}(a,b,c) = \sum_ {\stackrel {p+ q = n+1}{p, q > 0}} m_p (m_q(a,b),c) - m_p (a,m_q (b,c)).
\end{eqnarray}

Define a function $F$ by
$$F (a, b, c) = \sum_ {\stackrel {p + q=n+1}{p,~q > 0}} m_p (m_q(a,b),c) - m_p (a,m_q (b,c)),~a, b, c \in A.$$

Then $F$ is a $3$-cochain, that is, $F \in C^3(A; A).$ 

\begin{lemma}
The $3$-cochain $F$ is invariant. In other words, $ F \in C^3_G(A; A).$
\end{lemma}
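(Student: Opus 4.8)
The plan is to verify the defining invariance condition for $F$ directly, namely to show that for every $g \in G$ and all $a,b,c \in A$ we have $F(\phi_g(a), \phi_g(b), \phi_g(c)) = g\, F(a,b,c)$. The essential inputs are that each $m_p$ for $p \geq 1$ is equivariant by hypothesis (condition (2) of Definition \ref{equivariant-deformation-finite-order}), that $m_0 = \mu$ is equivariant by condition (4) of Definition \ref{associative-algebra-group-action}, and that the sum defining $F$ runs over indices $p,q > 0$ so that in fact every factor occurring is an equivariant map.

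First I would substitute $(\phi_g(a), \phi_g(b), \phi_g(c))$ into the formula for $F$ and treat the two families of terms $m_p(m_q(a,b),c)$ and $m_p(a, m_q(b,c))$ separately. For a single term of the first type, I would push the group element inward one layer at a time: since $m_q$ is equivariant (because $q>0$), we have $m_q(\phi_g(a), \phi_g(b)) = g\, m_q(a,b) = \phi_g(m_q(a,b))$; feeding this into the outer map and using equivariance of $m_p$ (again $p>0$) gives $m_p(\phi_g(m_q(a,b)), \phi_g(c)) = g\, m_p(m_q(a,b), c)$. The second family of terms is handled identically. Summing over all admissible $(p,q)$ and using $k$-linearity of the $G$-action, the scalar $g$ factors out of the whole sum, yielding $F(\phi_g(a), \phi_g(b), \phi_g(c)) = g\, F(a,b,c)$, which is exactly the condition for $F \in C^3_G(A;A)$.

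There is no real obstacle here, as the argument is a bookkeeping verification rather than anything requiring a new idea; the only point that genuinely matters is that the summation constraint $p,q > 0$ excludes the index value $0$ from \emph{both} slots, so one never has to invoke equivariance of any map other than the $m_i$ with $i \geq 1$ and the base multiplication $\mu$. Had a term with $p=0$ or $q=0$ appeared it would still be fine since $m_0 = \mu$ is equivariant too, but flagging that all factors are covered makes the argument airtight. The step I would be most careful to write cleanly is the nested application of equivariance, making explicit that equivariance of the inner map converts $m_q(\phi_g(a),\phi_g(b))$ into $\phi_g(m_q(a,b))$ before equivariance of the outer map can be applied, so that the two uses compose correctly to produce the single factor $g$ in front.
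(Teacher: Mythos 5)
Your proposal is correct and matches the paper's own proof essentially verbatim: both arguments substitute $(\phi_g(a),\phi_g(b),\phi_g(c))$ into the defining sum for $F$, apply equivariance of the inner $m_q$ and then the outer $m_p$ (both available since $p,q>0$), and pull the factor $g$ out of the sum by linearity. No gaps, and nothing differs beyond notation.
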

\begin{proof}
To prove that $F$ is invariant we must show that for every $g \in G$ the following holds:
$$F (ga, gb, gc) = gF(a, b, c),~~~\forall  a, b, c \in A.$$
Note that by Definition \ref{equivariant-deformation-finite-order}, we have  for all $a, b \in A,$
$$m_i(ga, gb) = gm_i(a, b).$$
Then, for all $a, b, c \in A,$
\begin{align*}
& F(ga, gb, gc)\\
& = \sum_ {\stackrel {p + q=n+1}{p,~q > 0}} m_p (m_q(ga,gb),gc) - m_p (ga,m_q (gb,gc))\\
& = \sum_ {\stackrel {p + q=n+1}{p,~q > 0}} m_p (gm_q(a,b),gc) - m_p (ga,gm_q (b,c))\\
& = \sum_ {\stackrel {p + q=n+1}{p,~q > 0}} gm_p (m_q(a,b),c) - gm_p (a, m_q (b,c))\\
& = g\sum_ {\stackrel {p + q=n+1}{p,~q > 0}} m_p (m_q(a,b),c) - m_p (a, m_q (b,c))\\
& = g F(a, b, c).
\end{align*}
Therefore, $F \in C^3_G(A; A).$
\end{proof}

\begin{defn}
The $3$-cochain $ F \in C^3_G(A; A)$ is called the $(n+1)^{th}$ obstruction cochain in extending the given equivariant deformation of order $n$ to an equivariant deformation of $A$ of order $n+1$.
\end{defn}

As in the non-equivariant case, we have the following result.
\begin{thm}
The $(n+1)^{th}$ obstruction cochain $ F$ is a $3$-cocycle and the given $n^{th}$ order equivariant deformation extends to an equivariant deformation of order $(n+1)$ if and only if the cohomology class of $F$ vanishes.
\end{thm}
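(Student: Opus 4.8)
The plan is to run Gerstenhaber's obstruction argument entirely inside the equivariant subcomplex $C^\sharp_G(A;A)$. Two assertions must be established: first, that $\delta F = 0$, so that $F$ determines a class $[F] \in H^3_G(A;A)$; and second, that the order-$n$ deformation extends to order $n+1$ precisely when this class is zero. The preceding lemma already guarantees $F \in C^3_G(A;A)$, and by the lemma of Section \ref{deformation-cohomology} the coboundary $\delta$ restricts to the invariant complex, so both the cocycle computation and the obstruction comparison may be phrased inside $C^\sharp_G(A;A)$.

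First I would show that $F$ is a $3$-cocycle. Expanding $\delta F(a,b,c,d)$ by the Hochschild coboundary formula and substituting the definition of $F$, one collects the resulting terms as products $m_p(m_q(\,\cdot\,,\,\cdot\,),\,\cdot\,)$ and $m_p(\,\cdot\,,m_q(\,\cdot\,,\,\cdot\,))$ with $p+q = n+1$. Regrouping these and repeatedly invoking the lower-order associativity relations (\ref{associativity-explicit}) for $0 \le r \le n$ — which hold by Remark \ref{associativity-equivariant-finite-order} — makes every term cancel in pairs, exactly as in the non-equivariant computation of Section \ref{recall}. This step involves no group-theoretic input: it is the classical identity $\delta F = 0$ for the Gerstenhaber associator, and it transfers verbatim because it concerns only the bilinear maps $m_0, \ldots, m_n$ and their relations. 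Hence $\delta F = 0$, and since $F$ is invariant it defines a class $[F] \in H^3_G(A;A)$.

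Next I would establish the equivalence. By Remark \ref{associativity-equivariant-finite-order}, extending the given deformation to order $n+1$ is exactly the problem of producing a $2$-cochain $m_{n+1}$ for which (\ref{associativity-explicit}) holds at $r = n+1$; using the coboundary formula, this relation is precisely $\delta m_{n+1} = F$. Condition $(2)$ of Definition \ref{equivariant-deformation-finite-order} further requires $m_{n+1} \in \mbox{Hom}^G_k(A\otimes A, A) = C^2_G(A;A)$, i.e.\ $m_{n+1}$ must be invariant. Thus the extension exists if and only if $F$ lies in $\delta\big(C^2_G(A;A)\big)$, the coboundaries of the equivariant complex. If such an extension exists then $F = \delta m_{n+1}$ with $m_{n+1}$ invariant, so $[F] = 0$ in $H^3_G(A;A)$. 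Conversely, if $[F] = 0$, I would choose an invariant $c \in C^2_G(A;A)$ with $\delta c = F$ and set $m_{n+1} = c$; the resulting multiplication is then associative modulo $(t^{n+2})$, and all of its coefficients $m_1, \ldots, m_{n+1}$ are invariant, so it is an equivariant deformation of order $n+1$.

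The main computational work is the cocycle identity $\delta F = 0$, which is the usual Gerstenhaber associator manipulation and the only genuinely lengthy step; I expect it to be routine. The point that must be handled with care is the equivariance of the corrective term $m_{n+1}$: in the classical theory one only needs $F$ to be a Hochschild coboundary, whereas here the extension must preserve the $G$-action, forcing $m_{n+1}$ to be $G$-equivariant. This is precisely why the obstruction is naturally housed in $H^3_G(A;A)$ rather than in $H^3(A;A)$, and it is taken care of automatically by the fact — supplied by the preceding lemma — that $F$ already lives in the invariant subcomplex.
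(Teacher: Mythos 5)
Your proposal is correct and takes essentially the same route as the paper: both arguments place the obstruction in $H^3_G(A;A)$ by combining the classical (non-equivariant) cocycle identity $\delta F = 0$ with the invariance of $F$ and the fact that $\delta$ restricts to the invariant subcomplex, and both identify extendability to order $n+1$ with $F$ being the coboundary of an \emph{invariant} $2$-cochain $m_{n+1}$. The only cosmetic difference is that the paper cites the non-equivariant obstruction theorem for $\delta F = 0$ outright, whereas you propose to re-run the Gerstenhaber associator computation, which you correctly observe transfers verbatim.
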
 
\begin{proof}
Note that  $F\in C_G^3(A;A) < C^3(A; A)$ is also the obstruction cochain for non-equivariant extension of the given deformation and 
$$\delta : C_G^n(A; A)\rightarrow C_G^{n+1}(A; A)$$ is the restriction of
$$\delta : C^n(A; A)\rightarrow C^{n+1}(A; A)$$ to the submodule $C_G^n(A; A).$ Therefore, $F$ is a cocycle in $C_G^3(A; A).$

If the cohomology class of $F$ vanishes then, there exists a $2$-cochain, say, $m_{n+1} \in C^2_G(A; A)$ such that
$$\delta m_{n+1}(a, b, c) = F(a, b, c) = \sum_ {\stackrel {p+ q = n+1}{p, q > 0}} m_p (m_q(a,b),c) - m_p (a,m_q (b,c)),$$ for all $a, b, c \in A.$ Then, we may use $m_{n+1}$ as the coefficients of $t^{n+1}$ to get an equivariant deformation of $A$ over $K/(t^{n+2}).$ Conversely, if the given deformation extends to an equivariant deformation of order $(n+1)$ then, (\ref{associativity-explicit}) holds for $0\leq r\leq n+1$ and in that case, as noted before, the obstruction cochain is a coboundary. 
Hence the result follows.
\end{proof}

\begin{corollary}\label{cohmological-condition-extension}
If $H^3_G(A; A) = 0$ then every $2$-cocycle in $C^2_G(A; A)$ of $(G, A)$ may be extended to an equivariant formal deformation.
\end{corollary}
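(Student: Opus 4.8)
The plan is to prove the statement by induction on the order of the deformation, with the preceding Theorem serving as the engine at each stage. Let $m_1 \in C^2_G(A;A)$ be a given invariant $2$-cocycle, so that $\delta m_1 = 0$. First I would observe that $m_t = m_0 + m_1 t$ is already an equivariant deformation of order one: equation (\ref{associativity-explicit}) holds for $r=0$ by associativity of $\mu$ and for $r=1$ precisely because $\delta m_1 = 0$ (cf. Remark \ref{case-zero-one}), while invariance of $m_1$ is the hypothesis. This supplies the base case of the induction.

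For the inductive step, suppose that for some $n \ge 1$ we have already constructed invariant $2$-cochains $m_1, \ldots, m_n \in C^2_G(A;A)$ giving an equivariant deformation of order $n$, i.e. satisfying (\ref{associativity-explicit}) for $0 \le r \le n$. I would then form the associated $(n+1)^{th}$ obstruction cochain $F$. By the preceding Lemma, $F$ is invariant, that is $F \in C^3_G(A;A)$, and by the preceding Theorem $F$ is a $3$-cocycle. The hypothesis $H^3_G(A;A)=0$ forces the cohomology class of $F$ in $H^3_G(A;A)$ to vanish, and the Theorem then yields an invariant $2$-cochain $m_{n+1} \in C^2_G(A;A)$ with $\delta m_{n+1} = F$. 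Using this $m_{n+1}$ as the coefficient of $t^{n+1}$ extends the deformation to order $n+1$, again with all coefficients invariant.

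Iterating this step, I obtain an infinite sequence $m_1, m_2, m_3, \ldots$ of invariant $2$-cochains such that (\ref{associativity-explicit}) holds for every $r \ge 0$. Setting $m_t = \sum_{n\ge 0} m_n t^n$ then defines an equivariant formal one-parameter deformation of $(G,A)$ in the sense of Definition \ref{definition-equivariant-deformation} whose infinitesimal is the given $m_1$, which is exactly the assertion.

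I expect the only genuine point to keep track of — and the reason the hypothesis is stated in terms of the equivariant group $H^3_G$ rather than the ordinary $H^3$ — is that at each stage the corrective $2$-cochain $m_{n+1}$ must itself be invariant. The ordinary vanishing $H^3(A;A)=0$ would only produce a possibly non-invariant primitive of $F$, which could break condition $(2)$ of Definition \ref{definition-equivariant-deformation}; it is precisely the vanishing of the class of $F$ inside $H^3_G(A;A)$, together with the fact that the equivariant coboundary on $C^\sharp_G(A;A)$ is the restriction of the Hochschild coboundary, that allows me to choose $m_{n+1} \in C^2_G(A;A)$. Once this is noted, no further computation is needed, since the preceding Lemma and Theorem have already verified the invariance and cocycle properties of $F$.
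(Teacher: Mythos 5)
Your proposal is correct and follows exactly the route the paper intends: the corollary is an immediate consequence of the preceding obstruction theorem, obtained by starting from the order-one deformation $m_0 + m_1 t$ and iterating the theorem, with $H^3_G(A;A)=0$ killing each obstruction class so that an \emph{invariant} primitive $m_{n+1} \in C^2_G(A;A)$ can be chosen at every stage. Your closing remark --- that vanishing of the ordinary $H^3(A;A)$ would only give a possibly non-equivariant primitive, which is precisely why the equivariant cohomology is the right deformation cohomology here --- is exactly the point of the paper's construction.
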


\begin{remark}
Let $(G, A)$ be a given action. If $\{A[[t]], m_t\}$ is an equivariant formal one-parameter deformation of $(G, A)$, then the family 
$$\{A^H[[t]]=A[[t]]^H, m^H_t = m_t/(A^H[[t]]\times A^H[[t]]): H<G\}$$ is a family of formal one-parameter deformations for the system of fixed point subalgebras $\{A^H: H<G\},$ (cf. Definition \ref{system-fixedpoint-subalgebra}).
\end{remark}

\section{Equivalence of equivariant deformations}\label{equivalence-of-deformed object} 
The aim of this final section is to discuss briefly the notion of equivalence of equivariant deformations of $(G, A).$

First observe that for a given action $(G, A),$ the action of $G$ on $A$ naturally extends to an action on $A[[t]] = A\otimes K.$ The algebra $A$ is a $G$-submodule of $A[[t]]$, and we could make $A[[t]]$ an algebra by bilinearly extending the multiplication of $A$ and the induced $G$-action is preserved in the sense that the induced multiplication is equivariant. We shall denote this by $(G, A[[t]]).$
But we may also impose other multiplications on $A[[t]]$ that agree with that of $A$ when we specialize to $t=0$. 

\begin{defn}\label{formal-iso}
Given two associative equivariant deformations 
$$\{A[[t]], m_t\}~~~ \mbox{and}~~~ \{A[[t]], n_t \}$$ an equivariant formal isomorphism between them is a $k[[t]]$-linear $G$-automorphism  
$$\Psi : A[[t]]\rightarrow A[[t]]$$
of the form 
$$\Psi (a) = \psi_0(a) + \psi_1(a)t + \psi_2(a)t^2 + \cdots$$
where  each $\psi_i$ is an equivariant $k$-linear map $A\rightarrow A,$ $\psi_0(a) = a$ for $a\in A$ and 
$$n_t(\Psi (a), \Psi(b)) = \Psi(m_t(a, b))$$ for all $a, ~b \in A.$ 
\end{defn}
Note that as  $\Psi$ is defined over $k[[t]]$, it is enough to consider $a$ in $A.$ 

\begin{defn}
We say that the two equivariant deformations
$$\{A[[t]], m_t\}~~~ \mbox{and}~~~ \{A[[t]], n_t\}$$
are equivalent if there exists a formal isomorphism 
$$\Psi: \{A[[t]], m_t\}\rightarrow \{A[[t]], n_t\},$$
and we write 
$$\{A[[t]], m_t\} \cong \{A[[t]], n_t \}.$$
\end{defn}

For a given action $(G, A),$ if two equivariant deformations $\{A[[t]], m_t\}$ and  $\{A[[t]], n_t \}$ are equivalent with an equivariant formal isomorphism
$$\Psi: \{A[[t]], m_t\}\rightarrow \{A[[t]], n_t\},$$
then for every subgroup $H<G,$
$$\Psi^H: \{A^H[[t]], m^H_t\} \rightarrow \{A^H[[t]], n^H_t \}$$ is a formal isomorphism of non-equivariant deformations of $A^H.$

\begin{corollary}
For a given action $(G, A),$ if two equivariant deformations $\{A[[t]], m_t\}$ and  $\{A[[t]], n_t \}$ are equivalent then for every subgroup $H<G,$
$$\{A^H[[t]], m^H_t\} \cong \{A^H[[t]], n^H_t \}$$ as non-equivariant deformations of $A^H.$
\end{corollary}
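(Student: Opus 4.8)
The plan is to obtain the desired equivalence of non-equivariant deformations of $A^H$ by restricting the given equivariant formal isomorphism to the fixed-point subalgebra $A^H[[t]] = A[[t]]^H$. By hypothesis there is an equivariant formal isomorphism $\Psi : \{A[[t]], m_t\} \to \{A[[t]], n_t\}$ of the form $\Psi(a) = \psi_0(a) + \psi_1(a)t + \cdots$, where $\psi_0 = \Id_A$ and each $\psi_i : A \to A$ is an equivariant $k$-linear map (Definition \ref{formal-iso}). The first step is to note that equivariance of the $\psi_i$ forces $\Psi$ to preserve each fixed-point set: if $a \in A^H$, then for every $h \in H$ and every $i$ we have $h\psi_i(a) = \psi_i(ha) = \psi_i(a)$, so $\psi_i(a) \in A^H$ and hence $\Psi(a) \in A^H[[t]]$. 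Thus $\Psi$ restricts to a $k[[t]]$-linear map $\Psi^H : A^H[[t]] \to A^H[[t]]$ with $\Psi^H(a) = a + \psi_1^H(a)t + \cdots$, where $\psi_i^H = \psi_i|_{A^H}$.

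Next I would verify that $\Psi^H$ is an automorphism and that its source and target are genuine deformations of $A^H$. Since $\Psi$ is a $G$-automorphism, $\Psi^{-1}$ is again equivariant, and the same fixed-point argument shows that $\Psi^{-1}$ restricts to $A^H[[t]]$; this restriction is a two-sided inverse of $\Psi^H$, so $\Psi^H$ is a $k[[t]]$-linear automorphism with constant term $\Id_{A^H}$. The restricted multiplications $m^H_t$ and $n^H_t$ are well-defined associative deformations of $A^H$ by the Remark at the end of Section \ref{equivariant-deformation}: equivariance of each $m_i$ and each $n_i$ gives $m_i(a,b), n_i(a,b) \in A^H$ whenever $a, b \in A^H$, so $A^H[[t]]$ is closed under both $m_t$ and $n_t$.

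Finally I would restrict the intertwining identity. The relation $n_t(\Psi(a), \Psi(b)) = \Psi(m_t(a,b))$ holds for all $a, b \in A$; specializing to $a, b \in A^H$ and using that $\Psi$, $m_t$ and $n_t$ all preserve $A^H[[t]]$, it becomes $n^H_t(\Psi^H(a), \Psi^H(b)) = \Psi^H(m^H_t(a,b))$ for all $a, b \in A^H$. Combined with the previous step, this shows that $\Psi^H : \{A^H[[t]], m^H_t\} \to \{A^H[[t]], n^H_t\}$ meets every requirement of a non-equivariant formal isomorphism, yielding the equivalence $\{A^H[[t]], m^H_t\} \cong \{A^H[[t]], n^H_t\}$.

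I do not anticipate a real obstacle: the entire content is that equivariance of the coefficient maps $\psi_i$, $m_i$, $n_i$ forces everything to preserve the fixed-point subalgebra $A^H$, after which the non-equivariant formal-isomorphism conditions are inherited verbatim. The only point needing genuine care is the well-definedness of the restriction, namely that $\Psi$, $\Psi^{-1}$, $m_t$ and $n_t$ each map $A^H[[t]]$ into itself, and this is precisely what equivariance supplies.
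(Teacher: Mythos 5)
Your proposal is correct and follows the same route as the paper: the paper's own justification is precisely the observation (stated in the paragraph preceding the corollary) that the equivariant formal isomorphism $\Psi$ restricts to a formal isomorphism $\Psi^H : \{A^H[[t]], m^H_t\} \rightarrow \{A^H[[t]], n^H_t\}$ of non-equivariant deformations, which is what you prove. Your write-up in fact supplies details the paper leaves implicit, namely that equivariance of the $\psi_i$, $m_i$, $n_i$ forces $\Psi$, $\Psi^{-1}$, $m_t$, $n_t$ to preserve $A^H[[t]]$, so the restriction is well defined and remains an automorphism.
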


Writing down the condition 
$$n_t(\Psi (a), \Psi(b)) = \Psi (m_t(a, b))$$ for all $a, ~b \in A$ (cf. Definition \ref{formal-iso}) and collecting the coefficients of  $t^n$ yields 
\begin{align}\label{explicit-product-preserv}
\sum_{i+j+k=n}  n_i(\psi_j(a),\psi_k(b)) = \sum_{i+j=n} \psi_i(m_j(a,b)).   
\end{align}

Observe that $\delta\psi_1 = m_1 - n_1.$ Thus, the $2$-cocycles $m_1$ and  $n_1$ are in the same cohomology class.

Given an equivariant $k$-linear map $\psi_1: A \rightarrow A$, we may ask when it may be extended to an equivariant formal isomorphism from $\{A[[t]], m_t\}$ to $\{A[[t]], n_t\}$.  For general $n$, equation (\ref{explicit-product-preserv}) may be written as
\begin{align}\label{extension-isomorphism}
\delta\psi_n(a,b) = \sum_{{i+j=n},\atop {i\neq n}} \psi_i(m_j(a,b)) - \sum_{{i+j+k=n},\atop {j,k\neq n}}  n_i(\psi_j(a),\psi_k(b)).
\end{align}
Given a truncated equivariant algebraic isomorphism  $\Psi = \sum \psi_it^i$,  $ i < n,$ define a $2$-cochain as follows.
For all $a,~b \in A,$ set
$$O_n(a, b) = \sum_{{i+j=n},\atop {i\neq n}} \psi_i(m_j(a,b)) - \sum_{{i+j+k=n},\atop {j,k\neq n}}  n_i(\psi_j(a),\psi_k(b)).$$

Then, $O_n\in C^2_G(A; A)$ is a $2$-cocyle. This $2$-cocyle is the obstruction to extending the given truncated isomorphism at the level $n.$ For suppose, the class represented by $O_n$ is zero. Then, we have an invariant cochain $\psi \in C^1_G(A; A)$ such that $\delta (\psi) = O_n.$ Now extend the given truncated isomorphism to the next level by taking $\psi_n = \psi$ as the coefficient of $t^n.$    

If all such obstructions vanish, then the two equivariant deformations are equivalent.

\begin{thm}
If $H^2_G(A; A) = 0,$ then all deformations of $(G, A)$ are isomorphic. 
\end{thm}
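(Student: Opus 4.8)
The plan is to show that any two equivariant deformations $\{A[[t]], m_t\}$ and $\{A[[t]], n_t\}$ of $(G,A)$ are equivalent, by constructing an equivariant formal isomorphism $\Psi = \sum_{i\geq 0}\psi_i t^i$ between them coefficient by coefficient in $t$. Since equivalence is an equivalence relation, producing such a $\Psi$ for every pair shows that all deformations lie in a single class, i.e. they are pairwise isomorphic. Throughout, the crucial feature I exploit is that the obstruction to extending a truncated isomorphism one step further lives in the equivariant cochain module $C^2_G(A;A)$ and is a cocycle there, so that the hypothesis $H^2_G(A;A)=0$ lets me solve for the next coefficient \emph{within} the equivariant complex, which is precisely what forces the new coefficient to be equivariant.

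For the base step I set $\psi_0 = \Id$. The coefficient of $t$ in the product-preservation identity (\ref{explicit-product-preserv}) reads $\delta\psi_1 = m_1 - n_1$. As observed just before the statement, $m_1$ and $n_1$ are both $2$-cocycles in $C^2_G(A;A)$; since $H^2_G(A;A)=0$ each is a coboundary, hence so is their difference. Therefore there exists an invariant $1$-cochain $\psi_1 \in C^1_G(A;A)$ with $\delta\psi_1 = m_1 - n_1$, and (\ref{explicit-product-preserv}) holds at order $1$.

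For the inductive step, suppose equivariant $1$-cochains $\psi_0 = \Id, \psi_1, \ldots, \psi_{n-1}$ have been chosen so that (\ref{explicit-product-preserv}) holds for all orders below $n$. The failure of the identity at order $n$ is measured by the $2$-cochain $O_n$ defined above through (\ref{extension-isomorphism}), and, as recorded in the discussion preceding the theorem, $O_n \in C^2_G(A;A)$ is a $2$-cocycle. Invoking $H^2_G(A;A)=0$ once more, we obtain $\psi \in C^1_G(A;A)$ with $\delta\psi = O_n$; setting $\psi_n := \psi$ makes (\ref{extension-isomorphism}), equivalently (\ref{explicit-product-preserv}), hold at order $n$. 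Because the equation is solved inside the equivariant subcomplex, the resulting $\psi_n$ is automatically invariant, so the symmetry requirement of Definition \ref{formal-iso} needs no separate check. By induction all coefficients $\psi_i$ are defined.

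The main point to verify carefully — and the only genuinely nontrivial input — is that $O_n$ is indeed a cocycle in $C^2_G(A;A)$; this rests on the associativity of both $m_t$ and $n_t$ through the relevant orders together with the induction hypothesis, exactly as in the classical non-equivariant rigidity argument, and is the place where one expands the Hochschild coboundary $\delta$ and uses (\ref{associativity-explicit}). Granting this, assembling $\Psi = \sum_{i\geq 0}\psi_i t^i$ yields a $k[[t]]$-linear map $A[[t]]\to A[[t]]$ with $\psi_0 = \Id$ and every $\psi_i$ equivariant; it is invertible over $k[[t]]$ since its leading term is the identity, hence a $G$-automorphism, and by construction $n_t(\Psi(a), \Psi(b)) = \Psi(m_t(a,b))$. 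Thus $\Psi$ is an equivariant formal isomorphism and the two deformations are equivalent. As the pair was arbitrary, all equivariant deformations of $(G,A)$ are isomorphic.
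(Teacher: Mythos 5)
Your proposal is correct and takes essentially the same route as the paper: the paper's own justification is exactly the obstruction-theoretic discussion preceding the theorem, in which a truncated equivariant isomorphism is extended step by step because each obstruction $O_n$ is an invariant $2$-cocycle and $H^2_G(A;A)=0$ forces it to be the coboundary of an invariant $1$-cochain. The details you add (the base step $\delta\psi_1 = m_1 - n_1$, the inductive bookkeeping, and the invertibility of $\Psi$ since $\psi_0=\Id$) are precisely what the paper leaves implicit, and, like the paper, you take the cocycle property of $O_n$ as given rather than verifying it.
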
 

\begin{defn}
An equivariant deformation of $(G, A)$ is called a trivial deformation if it is isomorphic to $(G, A[[t]]).$
\end{defn}

\begin{defn}
An action $(G, A)$ is called equivariantly rigid if it admits only trivial deformations.
\end{defn}

\begin{corollary}
If $H^2_G(A; A) = 0,$ then $(G, A)$ is rigid.
\end{corollary}

\end{document}